\begin{document}

\newtheorem{theorem}{Theorem}
\newtheorem{lemma}[theorem]{Lemma}
\newtheorem{claim}[theorem]{Claim}
\newtheorem{cor}[theorem]{Corollary}
\newtheorem{prop}[theorem]{Proposition}
\newtheorem{definition}{Definition}
\newtheorem{question}[theorem]{Open Question}
\newtheorem{rem}[theorem]{Remark}
\newtheorem{obs}[theorem]{Observation}
\newtheorem{example}[theorem]{Example}

\def\e{{\epsilon}}
\def\a{{\alpha}}
\def\l{{\lambda}}
\def\lf{{\lfloor}}
\def\rf{{\rfloor}}

\title{On a family of recurrences that includes the Fibonacci and the Narayana recurrences} 

\date{}

\author{
{\sc Christian ~Ballot} \\
{Universit\'e de Caen} \\
{D\'epartement de Math\'ematiques et Informatique}\\}
\maketitle
\begin{abstract}
We survey and prove properties a family of recurrences bears in relation to  
integer representations, compositions, Pascal's triangle,  
sums of digits, Nim games and Beatty sequences. 
\end{abstract}

\section{Introduction}
\label{sec:intro} The Fibonacci sequence $(F_k)_{k\ge0}$ defined by $F_0=0$, $F_1=1$ 
and $F_{k+2}=F_{k+1}+F_k$ is well known for its various natural and man-made occurrences 
and its many arithmetic properties. Perhaps the three most commonly visited cubic generalizations 
of the Fibonacci sequence are -- in this order -- the tribonacci, the Narayana and the Padovan sequences. 
They all have initial values $0$, $0$ and $1$ and their respective characteristic polynomials are 
$x^3-x^2-x-1$, $x^3-x^2-1$ and 
$x^3-x-1$. In fact, to each of these cubics we may naturally associate an infinite family  
of polynomials and recurrences: With each polynomial $f_q$ of degree $q\ge1$, we associate  
its fundamental recurrence, which has $q-1$ zeros followed by a one as initial values and $f_q$ 
as characteristic polynomial. Thus, we distinguish the three families

\medskip 

{\bf 1.} (the tribonacci family) with polynomials $x^q-\sum_{i=0}^{q-1}x^i$, ($q\ge2$). 

\medskip 

{\bf 2.} (the Narayana family) with $g_q(x):=x^q-x^{q-1}-1$, ($q\ge1$). 

\medskip

{\bf 3.} (the Padovan family) with polynomials $x^q-x-1$, ($q\ge2$).  

\medskip

 Besides this introduction, the paper contains two more sections. 
Section 2 presents six theorems that concern the Narayana family. Proofs are given for 
four theorems and references for the two remaining ones. There is little claim of novelty 
in those theorems. The interest of the paper lies more  
in the adopted perspective, in the combinatorial or counting nature of several proofs,  
in the way these theorems are brought and linked together, and in the further research 
questions this posture naturally raises. Bits of these 
theorems are sometime variously scattered, and we quote references we are aware of as we go 
along. However, Theorem \ref{thm:FarDif} and Theorem \ref{thm:Id+}, (3), can be found respectively 
in \cite{DeDo, BiSp}, two papers having to do with the same Narayana family of recurrences. 

\smallskip
 
 Given $q\ge1$, we will designate by the letter $G$, or the symbol $G^q$, the fundamental recurrence of $g_q(x)$. 
Thus, $G$ has characteristic polynomial $g_q$, 
i.e., $G_{k+q}=G_{k+q-1}+G_k$ for all $k\ge0$.  The first few $G$ values $G_0$, $G_1$, etc.,  
are $0,\dots,0,1,1,\dots,1,2,\dots$, with $q-1$ initial zeros followed by $q$ ones. 
Thus, $G_{q-1}=G_{2q-2}=1$. For $q=1,\,2$ and $3$, we find respectively $G^1_k=2^k$, 
$G^2_k=F_k$ and $G^3_k=N_k$, where $F_k$ and $N_k$ are the $k$th Fibonacci and Narayana numbers. 
\footnote{Many facts about these three sequences may be found in the OEIS \cite{Slo}; 
see also \cite{AJ} and our end-of-paper note for historical remarks}

 The theorems we present are better known to hold for the binary and Fibonacci cases.  

 The binary and the standard Fibonacci, or Zeckendorf \cite{Zec}, representations 
of integers extend to a unique representation into distinct $G$ numbers, for arbitrary $q\ge1$. 
We call here such a representation a $q$-{\it representation}. 
This is shown in Theorem \ref{thm:Zec+} using a counting argument. We present 
the so-called {\it far-difference} representation which uses signed $G$ numbers in Theorem \ref{thm:FarDif}. 
Theorem \ref{thm:Comp+} 
shows how $G$ numbers occur naturally in counting compositions of integers into sets  
of parts related to $q$. Theorem \ref{thm:Id+} proves a choice of three identities.  
One identity connects sums of binomial coefficients along lines of Pascal's triangle to 
$G$-numbers; another finds the total number of $G$-summands  over all 
$q$-representations of all integers up to $G_k$ in terms of a simple weighted 
sum of binomial coefficients. Theorem \ref{thm:Nim} proposes simple variants of the game 
of Fibonacci Nim when $1\le q\le3$ which bear a common proof and a similar winning 
strategy. Kimberling gave an expression involving Fibonacci numbers for all functions composed of several of 
the two Beatty sequences $a(n)=\lf n\a\rf$ and $b(n)=\lf n\a^2\rf$, where $\a=(1+\sqrt{5})/2$. 
This theorem generalizes to all pairs $a(n)=\lf n\a\rf$ and $b(n)=\lf n\a^q\rf$, where $\a$ is 
the dominant zero of $x^q-x^{q-1}-1$. This is the object of the sixth theorem. 

\smallskip

 Section 3 is an epilogue which contains precise open questions, general research projects 
and a historical note. In particular, the question of the existence of analogous 
proofs and theorems for the tribonacci and the Padovan families is raised.  
In fact, we also prove a few theorems in that direction to better explain what we mean. 
 
 The paper is elementary. Concepts are defined along the way.

\section{Six theorems concerning the Narayana family} 

\noindent{\bf Definition.} If $U=(u_k)_{k\ge0}$ is an increasing sequence of integers with $u_0=1$, then 
the {\it greedy algorithm} produces a unique representation of every positive integer $n$ 
in the form $\sum_{i\ge0}d_i u_i$, where the digits $d_i$ are nonnegative. If $u_i\le n<u_{i+1}$, 
then the algorithm retains $u_i$, that is, it takes the largest term not exceeding $n$ available and 
reiterates the process with the difference $n-u_i$. Thus, the next term selected is the unique 
$u_j$ such that $u_j\le n-u_i<u_{j+1}$. Say $u_1=3$, $u_2=7$ and $u_3>12$. Then 
$12=1\cdot u_2+1\cdot u_1+2\cdot u_0$. We say $u_i$ is a {\it summand}, or a $U$-{\it summand}, of $n$ whenever 
$d_i>0$. 

 Put $a_k:=G_{2q-2+k}$, for all $k\ge0$, so $A=(a_k)_{k\ge0}$ 
starts with the last $1$ appearing in the $G$ sequence. 
Thus, because of the $q$ consecutive terms of $G$ equal to $1$, we see that $a_k=1+k$, for $0\le k\le q$. 

\medskip

\noindent{\bf Lemma 1.} {\it Let $q\ge1$ be an integer. Then the number of binary strings 
of length $k$, where at least $q-1$ zeros separate any two ones, is $a_k$.}
\begin{proof} Put $x_k$ for the number of such strings of length $k$. If $1\le k\le q$, 
then we have the string of $k$ consecutive $0$'s and $k$ 
strings with exactly one $1$.  
Thus, $x_k=1+k=a_k$. If $k>q$, then there are $x_{k-1}$ strings ending with a $0$ and $x_{k-q}$ 
ending with a $1$, because in the latter case the $q-1$ penultimate digits must be $0$'s. Thus, 
$x_k=x_{k-1}+x_{k-q}$. Hence, $x_k=a_k$ for all $k\ge0$.    
\end{proof}

\smallskip

 We are about to show that every positive integer is uniquely representable as a sum of distinct 
$A$ numbers, where two consecutive  $A$ numbers are at least $q$ indices apart from 
each other. For instance, if $q=3$, $47=41+6+2=N_{13}+N_8+N_5$ and both differences $13-8$ and $8-5$ are 
at least $3$. This extends the classical binary and Zeckendorf representations of integers. 
We make a definition which will be handy throughout this note. 

\medskip

\noindent{\bf Definition and Remark 1.}  
A {\it $q$-representation} on $a_0,\dots,a_k$ of an integer $n$ is a sum  
$\sum_{i=0}^k\e_ia_i$ equal to $n$ where each $\e_i$ is $0$ or $1$ and 
$\e_i+\e_{i+1}+\dots+\e_{i+q-1}\le1$, for all $i\ge0$. We do not require that $\e_k$ be $1$.  
Note that $q$-representations on $a_0,\dots,a_{k-1}$ are in one-to-one correspondence 
with binary strings of length $k$ via $\sum_{i=0}^{k-1}\e_ia_i\mapsto(\e_0,\dots,\e_{k-1})$. 

\medskip
 
\begin{theorem}\label{thm:Zec+} Every integer $n\ge1$ has a unique representation, obtained 
by the greedy algorithm, of the form 
$$
\sum_{i\ge0}\e_ia_i,\quad \e_i\in\{0,1\};\quad\e_i+\e_{i+1}+\cdots+\e_{i+q-1}\le1,\;(i\ge0), 
$$ where $a_i=G_{2q-2+i}$. 
\end{theorem}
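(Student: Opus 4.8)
The plan is to reduce everything to the single recurrence $a_k = a_{k-1} + a_{k-q}$ (valid for $k \ge q$, being the defining relation $G_{m+q} = G_{m+q-1} + G_m$ with $m = k+q-2$), together with the base values $a_k = k+1$ for $0 \le k \le q$ recorded before Lemma 1. First I would isolate the key quantitative fact: if $M_k$ denotes the largest value of a $q$-representation on $a_0, \dots, a_{k-1}$, then $M_k = a_k - 1$. I would prove this by induction on $k$. For $0 \le k \le q$ the constraint forbids using two of the indices $0, \dots, k-1$, so the maximum is the single term $a_{k-1} = k = a_k - 1$. For $k \ge q+1$ I condition on whether $a_{k-1}$ is used: if not, the maximum is $M_{k-1} = a_{k-1} - 1$; if so, the separation condition (the window starting at $k-q$) forces $\e_{k-q} = \cdots = \e_{k-2} = 0$, leaving a $q$-representation on $a_0, \dots, a_{k-q-1}$, so the maximum is $a_{k-1} + M_{k-q} = a_{k-1} + a_{k-q} - 1 = a_k - 1$. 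Taking the larger of the two cases gives $M_k = a_k - 1$.

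Uniqueness then follows at once. Suppose an integer $n$ had two distinct $q$-representations, and let $j$ be the largest index at which their digits $\e_i$ and $\e_i'$ differ, say $\e_j = 1$ and $\e_j' = 0$. The two representations agree above $j$, so their partial sums over indices $\le j$ are equal. The first partial sum is at least $a_j$ (it contains $a_j$), while the second, having $\e_j' = 0$, is a $q$-representation on $a_0, \dots, a_{j-1}$ and hence at most $M_j = a_j - 1$. This contradiction shows the representation is unique.

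For existence I would verify directly that the greedy algorithm produces a string obeying the separation condition; by the previous paragraph this is then \emph{the} representation. Given $n \ge 1$, the algorithm selects the largest $a_j \le n$. If the remainder $n - a_j$ is positive then necessarily $j \ge q$, since $a_i$ and $a_{i+1}$ are consecutive integers for $i < q$ and so leave no room for a positive remainder; hence $n - a_j < a_{j+1} - a_j = a_{j+1-q}$ by the recurrence. Consequently the next term chosen has index at most $j - q$, so consecutive selected indices differ by at least $q$, which is exactly the constraint $\e_i + \cdots + \e_{i+q-1} \le 1$. Since the remainder strictly decreases, the algorithm terminates, yielding a valid $q$-representation of $n$.

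Finally, the counting route promised in the introduction drops out as a corollary: by the bijection of Remark 1 and Lemma 1 there are exactly $a_k$ $q$-representations on $a_0, \dots, a_{k-1}$; by uniqueness their values are distinct, and by the key lemma they all lie in $\{0, 1, \dots, a_k - 1\}$, so they must be precisely that set. I expect the main obstacle to be the identity $M_k = a_k - 1$ itself — in particular, being careful that the restriction of a $q$-representation to a terminal block of indices is again a $q$-representation, since this is what makes both the inductive case analysis and the uniqueness bound apply cleanly.
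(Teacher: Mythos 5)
Your proof is correct, but your uniqueness argument takes a genuinely different route from the paper's. The paper proves uniqueness by pure counting: the greedy algorithm shows every integer in $[0,a_k)$ has at least one $q$-representation on $a_0,\dots,a_{k-1}$, and since Lemma 1 says there are exactly $a_k$ such representations --- the same as the number of integers in $[0,a_k)$ --- the correspondence is forced to be a bijection. You instead prove the quantitative lemma $M_k=a_k-1$ by induction on the recurrence $a_k=a_{k-1}+a_{k-q}$ and deduce uniqueness directly by comparing two hypothetical representations at their largest differing index; this is the classical Zeckendorf-style argument, and it makes Lemma 1 unnecessary for the theorem (indeed you recover the content of Lemma 1 as a corollary at the end). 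What each buys: your version is self-contained and yields the sharper structural fact that the $q$-representations on $a_0,\dots,a_{k-1}$ enumerate $\{0,1,\dots,a_k-1\}$ in a way visibly tied to the bound $M_k$, while the paper's version is shorter given that Lemma 1 is needed anyway for Theorem 3 and fits the paper's stated theme of counting proofs. Your existence step (greedy plus $n-a_j<a_{j+1}-a_j=a_{j+1-q}$) is the same as the paper's, and you are in fact slightly more careful in noting that a positive remainder forces $j\ge q$. The only cosmetic blemish is the base case $k=0$ of your induction, where ``the single term $a_{k-1}$'' should be read as the empty sum with maximum $0=a_0-1$.
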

\begin{proof} Let $a_k$ be the largest summand in the greedy algorithm representation 
of $n$. Suppose $n>a_k$.  
Then, $a_j$ being the next summand, we have  $a_j\le n-a_k<a_{k+1}-a_k=a_{k+1-q}$. Hence, 
$k-j\ge k-(k-q)=q$. So the greedy 
algorithm produces a $q$-representation of $n$ since indices of successive summands 
are at least $q$ places apart. 

 Our proof shows that all integers in $[0,a_k)$ admit at least one $q$-representation on 
$a_0,\dots,a_{k-1}$.  But there are $a_k$ integers in $[0,a_k)$ as many as the number of 
$q$-representations on $a_0,\dots,a_{k-1}$ by Lemma 1. This proves the uniqueness of 
the $q$-representation of all nonnegative integers.  
\end{proof}

 Besides the famous Zeckendorf representation 
of which Theorem \ref{thm:Zec+} is a generalization, it was recently discovered \cite{Al} 
that if one uses summands that are signed Fibonacci numbers then every 
positive integer is uniquely representable into distinct summands where two summands of the same 
sign are at least four indices apart and two of distinct signs at least three indices apart. 
This was named the far-difference representation. Interestingly,  
this theorem generalizes with signed $G$ summands \cite[Theorem 1.6]{DeDo}.  

\begin{theorem}\label{thm:FarDif} Let $q\ge1$. Every integer $n\ge1$ has a unique representation 
of the form 
$$
\sum_{i\ge0}\e_ia_i,\quad \e_i\in\{0,\pm1\}, 
$$ where $a_i=G_{2q-2+i}$ and two summands of the same sign are at least $2q$ indices 
apart and two summands of different sign at least $q+1$ apart. 
\end{theorem}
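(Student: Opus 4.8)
The plan is to establish both existence and uniqueness of the far-difference representation, following the same overall structure as the proof of Theorem~\ref{thm:Zec+}, but with the crucial twist that signed summands require a greedy algorithm that alternates between adding and subtracting. First I would set up a signed greedy procedure: to represent $n\ge1$, take the largest $a_k\le n$, record $+a_k$, and then apply the algorithm recursively to $n-a_k$; whenever the current remainder is negative we reverse roles and subtract, i.e.\ we pick the largest $a_j$ not exceeding the absolute value of the remainder and assign it the opposite sign. The gap conditions should emerge from the recurrence $a_{k+1}-a_k=a_{k+1-q}$ together with a companion estimate controlling how large a remainder of the \emph{opposite} sign can be after one greedy step.

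The key computation is to bound the remainder. After choosing $+a_k$ as the leading term with $a_k\le n<a_{k+1}$, the remainder $r=n-a_k$ satisfies $0\le r<a_{k+1}-a_k=a_{k+1-q}$; this forces the next \emph{positive} summand (if the next summand is again positive) to have index at most $k+1-q$, giving the same-sign gap of at least $q$, which is too weak. So the real point is that when two summands carry the same sign the effective gap doubles to $2q$, and this should follow by iterating the bound: a same-sign successor cannot appear until after an intervening opposite-sign correction has been ruled out. Concretely I would prove, by analyzing the sign pattern the greedy algorithm produces, that between two like-signed terms $a_i,a_j$ (say both positive, $i>j$) there is forced room for the opposite-sign constraint, yielding $i-j\ge 2q$, while a sign change only needs $i-j\ge q+1$. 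These two gap bounds must be extracted from careful inequalities on partial sums of the $a$'s, for which the identity $a_{k+1}-a_k=a_{k+1-q}$ and telescoping sums of the form $\sum a_{i}$ over indices in an arithmetic-type progression are the main tools.

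For uniqueness I would again prefer a counting argument paralleling the one in Theorem~\ref{thm:Zec+}, rather than a direct manipulation of two competing representations. The plan is to define the admissible signed strings of length $k$ (those obeying the $2q$ same-sign and $q+1$ mixed-sign gap conditions using only $a_0,\dots,a_{k-1}$), count them via a recurrence analogous to Lemma~1, and show that the count matches exactly the number of integers in the interval that the greedy algorithm is capable of reaching. If existence shows every integer in some symmetric interval $(-M,M)$ (or the relevant range) admits at least one such representation, and the count of admissible strings equals the number of integers in that interval, then each integer has exactly one representation. Setting up the correct interval endpoints $M=M(k)$ in terms of the $a_i$, and verifying that the string-counting recurrence has the matching solution, is the bookkeeping core of this half.

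The hard part will be the remainder estimate that produces the asymmetric gaps $2q$ versus $q+1$: unlike the unsigned case, a single inequality $r<a_{k+1-q}$ does not suffice, because the greedy algorithm may immediately switch signs, and one must track how far the \emph{next same-sign} term can be pushed after a sign reversal. I expect this to require a two-step analysis of the greedy recursion (one positive step followed by the first negative step, or vice versa) together with a sharp bound of the form $a_k-a_{k-1}-a_{k-2}-\cdots$ telescoped down to an $a$-value with index shifted by $q$. Getting the constants exactly $2q$ and $q+1$ — rather than off by one — is where the characteristic recurrence $a_{k+q}=a_{k+q-1}+a_k$ must be used with care, and it is the step most likely to conceal a subtle boundary case near the small indices where $a_k=1+k$.
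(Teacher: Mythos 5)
The paper itself does not prove Theorem~\ref{thm:FarDif}; it is quoted from \cite[Theorem 1.6]{DeDo}. Your proposal, however, contains a genuine gap at its foundation: the greedy rule ``take the largest $a_k\le n$ and record $+a_k$'' never produces a negative remainder, so the signed machinery you describe never activates, and what your algorithm actually constructs is the ordinary $q$-representation of Theorem~\ref{thm:Zec+} (all signs $+$, gaps only $\ge q$). Concretely, for $q=2$ one has $a_0,a_1,a_2,a_3=1,2,3,5$ and $n=4$: your greedy step gives $4=3+1=a_2+a_0$, whose indices are only $2$ apart, violating the same-sign gap $2q=4$; the actual far-difference representation is $4=5-1=a_3-a_0$, whose leading term $a_3=5$ \emph{exceeds} $n$. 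So the leading summand cannot be selected as the largest term not exceeding $n$. The proof in \cite{Al} for $q=2$ and in \cite{DeDo} in general instead introduces $S_k=a_k+a_{k-2q}+a_{k-4q}+\cdots$, the largest integer whose admissible representation has leading term $+a_k$, shows that the intervals $(S_{k-1},S_k]$ partition the positive integers, assigns leading term $+a_k$ to every $n\in(S_{k-1},S_k]$, and then checks that the remainder $n-a_k$ lies in $[-S_{k-q-1},\,S_{k-2q}]$; it is exactly this two-sided containment that forces the gap $2q$ after a repeated sign and $q+1$ after a sign change, and it sets up the induction giving both existence and uniqueness.

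Your own second and fourth paragraphs acknowledge that the bound $0\le r<a_{k+1-q}$ is too weak and that the constants $2q$ and $q+1$ must come from elsewhere, but the fix you gesture at (``iterating the bound,'' ``forced room for an opposite-sign correction'') asserts the conclusion rather than deriving it: nothing in your selection rule ever forces a sign change, so nothing prevents two like-signed summands from being only $q$ apart. The counting idea for uniqueness could in principle be salvaged, but the relevant ranges are not symmetric intervals $(-M,M)$; they are the intervals $(S_{k-1},S_k]$ above, and identifying them is precisely the missing ingredient. Until the quantities $S_k$ (or an equivalent device controlling the extreme values reachable with a given leading index and sign) are introduced and the interval decomposition is proved, neither the existence nor the uniqueness half of your argument goes through.
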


\bigskip

 A composition of an integer $n\ge1$ into parts from a set $S$ of positive integers  
is an ordered sum of parts from $S$ that sum up to $n$. For instance, if $S$ is the 
set of odd integers, then $5$ admits five compositions with parts in $S$ since 
$$5=3+1+1=1+3+1=1+1+3=1+1+1+1+1.$$ Denoting the number of compositions 
of $n$ with parts in $S$ by $c_n(S)$, we have by the previous example 
$c_5(1\; \text{mod }2)=5$. The next theorem is a generalization of three well-known 
and often quoted composition results concerning Fibonacci numbers (see for instance 
\cite{Slo}, A000045, or \cite[Example 4]{WeHa}). 
\begin{align}\begin{split}\label{eq:CompF}
&F_{n+1}=c_n(1\text{ or }2),\\
&F_n=c_n(\text{odd parts}),\\
&F_{n-1}=c_n(\ge2).
\end{split}\end{align} 

\begin{theorem}\label{thm:Comp+}\footnote{The second and third statements of the theorem 
have been observed when $q=3$ 
\cite[A078012]{Slo}; it also has been observed that $c_n(1\text{ or }q)=c_{n+1}\big(1\;(\text{mod }q)\big)$ 
\cite[A000045]{Slo}} Let $q\ge1$. Compositions of $n\ge1$ into parts satisfy 
\begin{eqnarray*}
c_n(1\text{ or }q)&=&G_{n+q-1},\\
c_n\big(1\;(\text{mod }q)\big)&=&G_{n+q-2},\\
c_n(\ge q)&=&G_{n-1}.
\end{eqnarray*} 
$$
\text{In particular, }\qquad\quad c_n(1 \text{ or }q)=c_n\big(1\;(\text{mod }q)\big)+c_n(\ge q).
$$
\end{theorem}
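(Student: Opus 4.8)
The plan is to prove the first identity outright by a last-part recurrence and then reduce the other two to it by elementary bijections, so that essentially all the work sits in one computation. Write $b_n:=c_n(1\text{ or }q)$ for the number of compositions of $n$ into parts from $\{1,q\}$, with $b_0:=1$ for the empty composition.

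For $c_n(1\text{ or }q)=G_{n+q-1}$ I would split a $\{1,q\}$-composition of $n$ according to its last part. Deleting a trailing $1$ leaves an arbitrary such composition of $n-1$, and for $n\ge q$ deleting a trailing $q$ leaves one of $n-q$; hence $b_n=b_{n-1}+b_{n-q}$ for $n\ge q$, while $b_n=b_{n-1}=1$ for $1\le n\le q-1$. I would then verify that $n\mapsto G_{n+q-1}$ satisfies the same data: its first $q$ values are $G_{q-1},\dots,G_{2q-2}$, all equal to $1$ by the block of $q$ ones in $G$, and the relation $G_{n+q-1}=G_{n+q-2}+G_{n-1}$ is precisely $b_n=b_{n-1}+b_{n-q}$ after rewriting $G_{n-1}=G_{(n-q)+q-1}$. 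Equality of initial values and recurrence then forces $b_n=G_{n+q-1}$.

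The remaining two identities I would get from block-decomposition bijections that map into the $\{1,q\}$-compositions already counted. Encoding a part $1+jq$ as the string $(\underbrace{q,\dots,q}_{j},1)$ turns a composition of $n$ into parts $\equiv1\pmod q$ into a $\{1,q\}$-composition ending in $1$, the inverse being the forced parse that cuts immediately after each $1$; stripping that final $1$ yields an arbitrary $\{1,q\}$-composition of $n-1$, so $c_n(1\ \mathrm{mod}\ q)=b_{n-1}=G_{n+q-2}$. Symmetrically, encoding a part $q+r\ge q$ as $(q,\underbrace{1,\dots,1}_{r})$ turns a composition into parts $\ge q$ into a $\{1,q\}$-composition beginning with $q$ (parse by cutting before each $q$); stripping the leading $q$ gives a composition of $n-q$, so $c_n(\ge q)=b_{n-q}=G_{n-1}$.

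The displayed corollary then admits a one-line combinatorial reading: a $\{1,q\}$-composition of $n\ge1$ ends in a $1$ or in a $q$, the first kind enumerated by $c_n(1\ \mathrm{mod}\ q)$ and the second, after deleting the trailing $q$, by $b_{n-q}=G_{n-1}=c_n(\ge q)$; alternatively it is the case $k=n-1$ of the $G$-recurrence. The step I expect to need the most care is checking that the two encodings really are bijections: the parsing must be forced, which hinges on a $1$ and a $q$ being distinguishable delimiters, and this is exactly what degenerates at $q=1$ (where $G_k=2^k$ and ``$1$ or $q$'' has to be read with the two part-types kept separate). Pinning down that boundary case and aligning the index shift against the initial run of $q$ ones in $G$ is where the attention goes; as an independent check one can instead compute generating functions from $\sum_{k\ge0}G_kx^k=x^{q-1}/(1-x-x^q)$, against which the three composition series $1/(1-x-x^q)$, $(1-x^q)/(1-x-x^q)$ and $(1-x)/(1-x-x^q)$ match term by term for $n\ge1$.
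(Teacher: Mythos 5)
Your proof is correct, but it takes a genuinely different route from the paper's. For the first identity the paper does not use your last-part recurrence $b_n=b_{n-1}+b_{n-q}$: it sets up an explicit bijection between $q$-representations on $a_0,\dots,a_{k-1}$ (equivalently, the binary strings of Lemma~1) and $\{1,q\}$-compositions of $k+q-1$, so the count $a_k$ comes from Lemma~1; that bijection is not incidental, since it is reused in the proof of identity (5) of Theorem~\ref{thm:Id+}, where the number of $q$-parts of a composition must match the number of summands of the corresponding integer. For the second identity the paper splits by the \emph{first} part and telescopes to get $c_{n+q}=c_{n+q-1}+c_n$, where you instead give a direct block bijection onto $\{1,q\}$-compositions ending in $1$; for the third, the paper builds a MacMahon-conjugation bijection (generalizing Sills) from compositions of $n$ into parts $1\pmod q$ to compositions of $n+q-1$ into parts $\ge q$, whereas you encode directly into $\{1,q\}$-compositions beginning with $q$. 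Your route is shorter and more uniform, and it carries a bonus the paper does not claim: partitioning $\{1,q\}$-compositions by their last part, your two encodings (the second composed with reversal) give an explicit combinatorial proof of $c_n(1\text{ or }q)=c_n\big(1\pmod q\big)+c_n(\ge q)$, which the paper derives only from the $G$-recurrence and in fact poses as the first open question of Section~3. What you give up is the link to $q$-representations needed later and the conjugation bijection, which is of independent interest. Your flagging of the $q=1$ degeneracy matches the paper's own remark, and your index checks and generating-function verification are sound.
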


 It would be easy to verify the three identities of Theorem \ref{thm:Comp+} using \cite[Theorem 2.1]{WeHa}, 
which gives a simple characterization of the sets $S$ for which $(c_n(S))_{n\ge1}$ is a linear 
recurrence and a simple mean of finding the associated characteristic polynomial. But we proceed differently. 
\begin{proof} We recall the bijection between 
$q$-representations on $a_0,\dots,a_{k-1}$, i.e., sums $\sum_{i=0}^{k-1}\e_ia_i$, and 
binary strings $(\e_0,\dots,\e_{k-1})$ seen in Remark 1. Strings $(\e_0,\dots,\e_{k-1})$ 
are themselves in bijection with the longer strings $(\e_0,\dots,\e_{k-1},0,\dots,0)$, where we 
added $(q-1)$ $0$'s to the right of $\e_{k-1}$. But these latter strings are in one-to-one 
correspondence with compositions of $k+q-1$ into parts $1$ or $q$. Indeed, as we read a binary string 
$\e_0,\dots,\e_{k-1},0,\dots,0$, left-to-right, map any $0$ to a part $1$ and any $1$ followed 
by $(q-1)$ $0$'s to a part $q$. We obtain a composition of $k+q-1$ into parts $1$ or $q$. This latter 
map is clearly reversible. As an illustration we show the correspondence when $q=3$ and $k=4$. 

$$
\begin{matrix}
0 &  0 & 0 & 0 & | & 0 & 0\\
1 &  0 & 0 & 0 & | & 0 & 0\\
0 &  1 & 0 & 0 & | & 0 & 0\\
0 &  0 & 1 & 0 & | & 0 & 0\\
0 &  0 & 0 & 1 & | & 0 & 0\\
1 &  0 & 0 & 1 & | & 0 & 0\\
\end{matrix}\quad\longleftrightarrow\quad
\begin{matrix}
1 & + & 1 & + & 1 & + & 1 & + & 1 & +\quad  1\\
3 & + & 1 & + & 1 & + & 1\\
1 & + & 3 & + & 1 & + & 1\\
1 & + & 1 & + & 3 & + & 1\\
1 & + & 1 & + & 1 & + & 3\\
3 & + & 3\\
\end{matrix}  
$$ By Lemma 1, the number of $q$-representations on $a_0,\dots,a_{k-1}$ is $a_k$.  
Therefore $c_{k+q-1}(1\text{ or }q)=a_k$. Putting $n=k+q-1$, we find that $c_n(1\text{ or }q)=a_{n-q+1}=
G_{n+q-1}$. If $1\le n<q$, then $c_n(1\text{ or }q)=c_n(1)=1=G_{n+q-1}$.

 We now prove the second identity of Theorem \ref{thm:Comp+}. 
Splitting the $c_n:=c_n(1\pmod q)$ compositions of $n$  
according to their first part, which may be $1,1+q,\dots,1+\l q$, where $\l=\lfloor
\frac{n-1}{q}\rfloor$, we see that $c_n=c_{n-1}+c_{n-1-q}+\cdots+c_{n-1-\l q}$. 
Similarly we find that $c_{n+q}=c_{n+q-1}+c_{n-1}+\cdots+c_{n-1-\l q}$. Subtracting 
the former identity from the latter yields 
$c_{n+q}=c_{n+q-1}+c_n$. Comparing initial conditions leads to $c_n=G_{n+q-2}$. 
 
 To obtain the number of compositions of $n$ into parts $\ge q$ we establish a bijection between 
compositions of $n$ into parts $1\pmod q$ and compositions of $n+q-1$ into parts $\ge q$. This 
interesting bijective proof is an extension of the one found by Sills \cite{Sil} between compositions 
of $n$ into odd parts and compositions of $n+1$ into parts $\ge2$. 

 To a composition $P=(p_1,p_2,\dots,p_s)$ of $n$ into $s$ parts, Sills associates in a one-to-one way 
the MacMahon binary sequence, $M=M(P)$, of length $n-1$, where each $p_i$ is successively 
transformed into a 
string of $p_i-1$ zeros followed by a one, except for the last part $p_s$ transformed into $p_s-1$ 
zeros only, an extra one being superfluous. For instance, for the composition $P=(3,1,4)$ of $n=8$, 
$M(P)=001\,1\,000$. The {\it conjugate} 
$P^c$ of $P$ is defined as the composition of $n$ whose MacMahon sequence is $M'$, where $M'$ 
is obtained from $M$ replacing ones by zeros and zeros by ones. 

 Suppose $n=p_1+p_2+\dots+p_s$ where $p_i\equiv1\pmod q$ for each $i$. Put $P=(p_1,\dots,p_s)$ 
and $M=M(P)$. Then all maximal strings of $0$'s in $M$ have a length a multiple of $q$. Thus, $M'$ has 
strings of $1$'s of such lengths as well. For instance, say $q=3$ and $n=1+4+1+7+1$, then 
$M=1000110000001$ and $M'=0111\;00111\;111\;0$. This disposition in $M'$ of $0$'s and $1$'s means that 
$P^c$ is of the form $(q_1,1,1,q_2,1,1,\dots,q_r,1,1,q_{r+1})$ for some $r\ge1$. In general, 
$P^c$ is a succession for $i=1$ to $r$ of a part $q_i$  followed by $q-1$ parts equal to $1$ 
ending with an additional part $q_{r+1}$.  
Thus, 
all parts of $P^c$ of indices $2,3,\cdots,q-1,0\pmod q$ are equal to $1$. 
In the above example, $P^c=({\bf 2},1,1,{\bf 3},1,1,{\bf 1},1,1,{\bf 2})$. Finally, 
transform $P^c$ into $P'=(q'_1,q'_2,\dots,q_{r+1}')$, where $q_i'=q_i+q-1$ for each $i$, 
including $i=r+1$. We have 
reached a composition of $n+q-1$ in which all parts are $\ge q$. The reverse map is 
well-defined. For an example with $q=3$, if $P'$ is the composition $(5,3,4)$ of twelve 
into parts all $\ge3$, then $P^c=({\bf3},1,1,{\bf 1},1,1,{\bf2})$, $M'=001\;11111\;0$, 
$M=1\,1\,0000001$ and $P=(1,1,7,1)$, a composition of $10=12-(q-1)$ into parts all $1\pmod 3$. 
Hence, for all $n\ge1$, $c_{n+q-1}(\ge q)=c_n\big(1\;(\text{mod }q)\big)=G_{n+q-2}$. Thus, 
$c_n(\ge q)=G_{n-1}$ since equality holds trivially for $n=1,\dots,q-1$. 
\end{proof}   

\medskip

\noindent{\bf Remark.} Theorem \ref{thm:Comp+} needs proper interpretation when $q=1$.  
Both $c_n\big(1\pmod 1\big)$ and $c_n(\ge 1)$ are simply the number of compositions 
of $n$ into positive parts, which is well-known to be $2^{n-1}$. One of the fastest way 
to see this, as noted by Sills \cite{Sil}, is that compositions of $n$ are in one-to-one 
correspondence with all binary sequences of length $n-1$ via the MacMahon bit sequence.   
But $c_n(1 \text{ or }1)$ means that we have two kinds of $1$'s, say blue and red. 
Thus, $c_n(1 \text{ or }1)=2^n$, each of the $n$ ones having two possible colorings. 

\bigskip

Let $U=(u_k)_{k\ge0}$ be an increasing sequence of integers with $u_0=1$. 
If $\sum_{k\ge0}d_k u_k$ is the representation of $n$ obtained by the greedy 
algorithm, then we define the {\it sum-of-digit} function, $s_U(n)$,  
as the finite sum $\sum_{k\ge0}d_k$. The {\it cumulative} sum-of-digit function,   
$S_U$, is defined by $S_U(n):=\sum_{j=0}^{n-1}s_U(j)$. Many authors have studied 
in detail the function $S_U(n)$ when $U$ is a linear recurrent sequence 
\cite{PeTi,GrTi,DrGa}. Typically, in those studies,   
$U$ has a monic characteristic polynomial, $\chi_U$, whose integral coefficients are 
nonnegative and satisfy various additional conditions. Most often  
hypotheses imply that $\chi_U$ possesses a dominant real zero. 
Then we generally obtain that as $n$ tends to infinity 
\begin{equation}\label{eq:asy}
S_U(n)=c_Un\log n+O(n),
\end{equation} for some positive constant $c_U$, 
where the $O(n)$ term may often be described with great precision.  
This is the case, in particular, of geometric sequences, or of the sequence $u_n=F_{n+2}$,  
where $F_n$ is the $n$th Fibonacci number \cite{Tro,Del,Coq}. Explicit expressions 
for $c_U$ have been given in terms of the coefficients and the dominant real 
zero, $\a$, of $\chi_U$ \cite{GrTi, Ba}. In several papers \cite{Coq, Pih, Ba2, Ba}, 
the first step in proving (\ref{eq:asy})  
begins by observing that $v_k:=S_U(u_k)$ is itself a recurring sequence with characteristic 
polynomial $\chi_U^2$. It follows that $v_k=bku_k+O(u_k)$ for some $b>0$. Since $k\log\a\sim\log u_k$, 
(\ref{eq:asy}) holds for $n=u_k$ with $c_U=b/\log\a$. In the third identity of the next theorem 
we will prove that $b_k:=S_A(a_k)$ is recurring with characteristic polynomial 
$g_q^2$ for the sequence $A$ of Lemma 1. But we do so using composition results of 
Theorem \ref{thm:Comp+}. The first identity of Theorem \ref{thm:Id+} generalizes the 
well-known facts that $2^{k+1}-1=1+2+\cdots+2^k$ or that $F_{k+2}-1=F_1+F_2+\cdots+F_k$. 
This identity and more of the kind can be found in the paper \cite{BiSp}.  
That the Fibonacci numbers pop out of summing diagonals in Pascal's triangle is often perceived 
as a surprise\footnote{This observation was made at least as early as 1877 \cite[p. 13]{Lu}}. 
On the other hand, every mathematician knows that summing binomial coefficients along the $n$th 
line produces $2^n$. Summing along lines of slope $q-1$ produces $G$ numbers, a fact recorded 
for instance in \cite{Slo}, entry A003269.

\begin{theorem}\label{thm:Id+} The identities listed below hold for all $n\ge0$ 
\begin{eqnarray}
G_{n+q}-1&=&\sum_{k=0}^nG_k,\\
G_{n+q-1}&=&\sum_{k=0}^{\lfloor n/q\rfloor}\binom{n-k(q-1)}k, \\ 
S_A(G_{n+q-1})&=&\sum_{k=0}^{\lfloor n/q\rfloor}k\binom{n-k(q-1)}k,
\end{eqnarray} with $A=(a_n)_{n\ge0}=(G_{n+2q-2})_{n\ge0}$.  
\end{theorem}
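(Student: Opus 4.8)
The plan is to take the three identities in turn, using a direct telescoping for the first and leaning on the composition results of Theorem \ref{thm:Comp+} for the second and third. For the first identity I would rewrite the defining recurrence $G_{k+q}=G_{k+q-1}+G_k$ as $G_k=G_{k+q}-G_{k+q-1}$ and sum over $k=0,\dots,n$. The right-hand side telescopes to $G_{n+q}-G_{q-1}$, and since $G_{q-1}=1$ this gives $\sum_{k=0}^n G_k=G_{n+q}-1$. (Equivalently one may induct on $n$, the base case $n=0$ reducing to $G_q=1$.)

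For the second identity I would invoke Theorem \ref{thm:Comp+}, which gives $G_{n+q-1}=c_n(1\text{ or }q)$, and then classify the compositions of $n$ into parts $1$ or $q$ by the number $k$ of parts equal to $q$. Such a composition has $n-kq$ parts equal to $1$, hence $n-k(q-1)$ parts in total, and there are $\binom{n-k(q-1)}{k}$ ways to place the $q$-parts among these positions; summing over $0\le k\le\lfloor n/q\rfloor$ yields the stated binomial sum.

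The third identity is the crux, and I would obtain it by decorating the very same bijection rather than by analysing the recurrence for $b_k:=S_A(a_k)$ directly. Since $a_k=G_{2q-2+k}$, one has $G_{n+q-1}=a_{n-q+1}$, so $S_A(G_{n+q-1})$ is the total number of $A$-summands over all integers in $[0,a_{n-q+1})$. By Theorem \ref{thm:Zec+} together with Remark 1 these integers correspond bijectively to the binary strings of length $n-q+1$ counted in Lemma 1, the number of $A$-summands being exactly the number of ones; and by the bijection in the proof of Theorem \ref{thm:Comp+} these strings correspond to compositions of $n$ into parts $1$ or $q$, the number of ones becoming the number of $q$-parts. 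Hence $S_A(G_{n+q-1})$ equals the total number of $q$-parts over all such compositions, and refining the count of the second identity---each of the $\binom{n-k(q-1)}{k}$ compositions having $k$ $q$-parts contributes exactly $k$---produces $\sum_{k=0}^{\lfloor n/q\rfloor}k\binom{n-k(q-1)}{k}$.

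The main obstacle I anticipate is purely the index and boundary bookkeeping: one must track the shifts $a_k=G_{2q-2+k}$ and $G_{n+q-1}=a_{n-q+1}$ carefully so that strings of length $n-q+1$ match compositions of $n$, and one must dispose of the small range $0\le n<q-1$ separately, where $G_{n+q-1}=1$ and both sides vanish, exactly as the cases $1\le n<q$ were handled in Theorem \ref{thm:Comp+}. No single step is technically deep once the chain of correspondences ``sum of digits $\leftrightarrow$ number of ones $\leftrightarrow$ number of $q$-parts'' is made precise; as a byproduct the explicit formula shows that $\bigl(S_A(a_k)\bigr)_k$ is a linear recurrence, consistent with the characteristic polynomial $g_q^2$ mentioned above.
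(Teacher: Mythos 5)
Your proof is correct, and for the third identity it takes a genuinely different route from the paper. For the first two identities you essentially match the paper (it notes both follow by induction and then gives the same ``classify compositions by the number $k$ of $q$-parts'' count for the binomial sum). For identity (5), the paper also starts from the bijection transferring the digit sum $s_A(\ell)$ to the number of $q$-parts of a composition of $n$ into parts $1$ or $q$ --- that is exactly its equation $S_A(a_n)=\sum_{c}\sharp q\text{'s in }c$ --- but it then proves the identity indirectly: it shows $b_n:=S_A(a_n)$ satisfies $b_n=b_{n-1}+a_{n-q}+b_{n-q}$, hence is annihilated by $g_q^2(E)$, shows the right-hand side is likewise annihilated by $g_q^2(E)$ via the polynomial $f_n(x)=\sum_k\binom{n-k(q-1)}{k}x^k$, the recursion $f_n=f_{n-1}+xf_{n-q}$ and differentiation at $x=1$, and finally matches $2q$ initial values. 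You instead finish directly: having identified $S_A(G_{n+q-1})$ with the total number of $q$-parts over all compositions of $n$ into parts $1$ or $q$, you refine the count of identity (4) by weighting each of the $\binom{n-k(q-1)}{k}$ compositions with $k$ $q$-parts by $k$. This is shorter and more elementary, needing no shift-operator or generating-polynomial machinery; what the paper's longer route buys is the explicit structural fact that $\bigl(S_A(a_n)\bigr)$ is a linear recurrence with characteristic polynomial $g_q^2$, which is the point of the surrounding discussion of the asymptotic $S_U(n)=c_Un\log n+O(n)$ (your closing remark that this recurrence follows ``as a byproduct'' of the closed form is not immediate from the formula alone --- one still needs essentially the paper's $f_n=f_{n-1}+xf_{n-q}$ argument to extract it). Your boundary bookkeeping for $0\le n<q-1$ is handled correctly.
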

\begin{proof} The first two identities are easily proved by induction (using the 
well-known identity $\binom{n}{k}=\binom{n-1}k+\binom{n-1}{k-1}$ for the second). 
However, we provide a proof of (4) which connects it to Theorem \ref{thm:Comp+}.   
We saw that $G_{n+q-1}=c_n(1\text{ or }q)$, the number of compositions of $n$ into parts $1$ or $q$. 
We may count such compositions by splitting them according to their number of $q$-parts. 
A composition containing $k$ parts equal to $q$ contains $k+(n-kq)=n-k(q-1)$ parts. 
Since we may place those $k$ $q$'s in $n-k(q-1)$ `positions', there are $\binom{n-k(q-1)}k$ such 
compositions. Summing over all $k$'s yields the identity. 

  Finally we prove that both terms of identity (5) are annihilated by 
$g_q^2$. Early in the proof of Theorem \ref{thm:Comp+}, we used a bijection -- call it $\varphi$ -- 
between integers $\ell$ in $[0,a_n)$ and the set $\mathcal C(n+q-1)$ of all compositions   
of $n+q-1$ into parts $1$  or $q$ to establish $G_{n+q-1}=c_n(1\text{ or }q)$.  
The bijection $\varphi$ was such that if $\varphi(\ell)=c$, then $\sharp q$'s in $c$ $=s(\ell)$. That is, 
the number of parts equal to $q$ in $c$ is equal to the number of summands in the $q$-representation 
of $\ell$ on $a_0,\dots,a_{n-1}$. 
Therefore, 
\begin{equation}\label{eq:x}
S_A(a_n)=\sum_{c\in\mathcal C(n+q-1)}\sharp q'\text{s in }c.
\end{equation} Put $b_n:=S_A(a_n)$. 
Now, compositions of $n+q-1$ ending with a part $1$ account for $b_{n-1}$ of all $q$'s in 
the sum on the righthand side of (\ref{eq:x}), while  
the $a_{n-q}$ compositions ending with a $q$ account for $a_{n-q}+b_{n-q}$. Indeed, compositions of 
$n+q-1$ into parts $1$ or $q$ ending with a part $q$ are in one-to-one correspondence with compositions 
of $n-1$ into parts $1$ or $q$. But, by Theorem \ref{thm:Comp+}, there are $G_{(n-1)+q-1}=a_{n-q}$ 
such compositions. Thus, $b_n=b_{n-1}+
a_{n-q}+b_{n-q}$. That is $g_q(E)\cdot b_{n-q}=a_{n-q}$, where $E$ is the shift operator 
$E\cdot y_n=y_{n+1}$. As $(a_{n-q})$ is annihilated by $g_q$, this means that $g_q^2$ annihilates 
$(b_n)$.

 Now set $f_n(x):=\sum_{k\ge0}\binom{n-k(q-1)}kx^k$. Because 
$\binom{n-k(q-1)}{k}=\binom{n-k(q-1)-1}k+\binom{n-k(q-1)-1}{k-1}$, we see 
that
\begin{eqnarray*}
f_n(x)&=&f_{n-1}(x)+x\,\sum_{k\ge1}\binom{n-k(q-1)-1}{k-1}x^{k-1}\\
&=&f_{n-1}(x)+x\,\sum_{k\ge0}\binom{n-q-k(q-1)}{k}x^k
=f_{n-1}(x)+xf_{n-q}(x).
\end{eqnarray*}
Differentiating yields $f_n'=f_{n-1}'+xf_{n-q}'+f_{n-q}$.  
Defining $x_n$ as $f_n'(1)$, which is $\sum_{k\ge0}k\binom{n-k(q-1)}k$, we obtain 
$$
(E^q-E^{q-1}-I)\cdot x_{n-q}=f_{n-q}(1)\quad\text{ or }\quad g_q^2(E)\cdot x_n=0,
$$ since $f_n(1)=G_{n+q-1}$ is annihilated by $g_q(E)$. 
Checking that the $2q$ initial values of the two sequences in (5) coincide (they are zero for 
$-q<n<q$ and one for $n=q$) finishes the proof.
\end{proof} 

\noindent{\bf Remark.} The constant $c_A$ of (\ref{eq:asy}) has an unusually simple 
expression, namely, $c_A=(\a g'_q(\a)\log\a)^{-1}$, $\a>1$ being the dominant zero of $g_q$. 
Also, among a whole class of recurrences of order $q$, $c_A$ is minimal for $1\le q\le 3$, 
and conjecturally so for all $q$. Moreover, $c_A\sim(\log q)^{-1}$ as $q$ tends to infinity 
\cite{Ba}. 

\bigskip 

 The game of Nim in which two players take turn in removing any number of beans, or tokens, 
from one of several heaps was fully analyzed in 1902 \cite{Bou} and its beautiful analysis was 
restated in the famous book \cite{HW}. Usually it is agreed that the winner is the one who removes the 
last bean (or beans). In one of the variants, called Fibonacci Nim, 
there is a unique pile of beans. If a player removes $p$ beans then the next may remove $p'$ beans as 
long as $p'\le2p$. 
On the very first move any number of beans may be removed, but not the whole pile. 
The winning strategy, based on the Zeckendorf representation, 
is reported in several books \cite{Kn}, \cite[pp.\ 219--223]{Cor} and \cite[pp.\ 204--206]{TW}.  
Many variants of Nim, or Nim-related games, may be found in a small book by Guy, where 
the winning strategy for Fibonacci Nim and higher Fibonacci Nim is a project exercise \cite[p.\ 22]{Guy}. 
Here we observe that, at least for any $q$, $1\le q\le3$, and under the generalized rules (\ref{eq:Nim}) 
of Fibonacci Nim:   
\begin{equation}\label{eq:Nim}
p'\le\begin{cases}qp,&\text{ if }p=1,\text{ or, if }q\text{ is }1\text{ or }2;\\
qp-1,&\text{ if }p\ge2\text{ and }q=3,
\end{cases}
\end{equation} there is a common winning strategy which guarantees the first player, say A, to win 
if and only if the initial number of beans is not a $G$ number. 

 In the sequel we use the expression $G$-summand to mean an $A$-summand. 

\noindent{\bf Lemma 2.} {\it Consider the Fibonacci Nim variant 
(\ref{eq:Nim}) above with $1\le q\le3$. Suppose some player, say X, leaves a number 
of beans with least $G$-summand, $G_\ell$, and the other player, Y, removes $p$ beans with $p<G_\ell$, 
then player X can remove the least $G$-summand of $G_\ell-p$.} 
\begin{proof}  Let $G_\ell-p$ and $p$ have the $q$-representations 
\begin{eqnarray*}
G_{\ell}-p & = & G_{b_1}+\cdots+G_{b_s},\\
p & = & G_{c_1}+\cdots+G_{c_t},
\end{eqnarray*} where $b_1>\cdots>b_r\ge2q-2$ and $c_1>\cdots>c_t\ge2q-2$. Then 
$$
G_\ell=G_{b_1}+\cdots+G_{b_s}+G_{c_1}+\cdots+G_{c_t}.
$$ But, as the RHS of the above identity is not the $q$-representation of $G_\ell$, 
we must have $b_s\le c_1+q-1$. We need to verify that X is always in a position to remove $G_{b_s}$ 
beans. Indeed, 
$$
qp\ge qG_{c_1}\ge qG_{b_s-q+1}\ge G_{b_s},
$$ since when $q=2$, $qG_{b_s-q+1}\ge G_{b_s-1}+G_{b_s-2}=G_{b_s}$, and when $q=3$, 
$3G_{b_s-2}\ge2G_{b_s-2}+G_{b_s-4}=G_{b_s-1}+G_{b_s-2}\ge G_{b_s}$. The  
inequality $3G_{b_s-2}\ge2G_{b_s-2}+G_{b_s-4}$ is strict unless $b_s=6$. But the 
sixth Narayana number $N_6=3$ and, when $q=3$, removing three beans is always a legal 
move. 
\end{proof}

\begin{theorem} \label{thm:Nim} Suppose $1\le q\le3$. In the Fibonacci Nim variant 
(\ref{eq:Nim}) above, the first 
player wins if and only if the initial number of beans is not a $G$ number and if,  
every time his turn to play comes, he removes a number of beans equal to the least 
$G$-summand of the number of beans left. 
\end{theorem}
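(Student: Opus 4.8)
The plan is to prove that the least-$G$-summand strategy, once a player adopts it, is self-perpetuating and forces that player to take the last bean, and then to read off both directions of the equivalence from this single fact. The engine is Lemma 2: it already guarantees that if the strategist has just left a pile whose least $G$-summand is $G_\ell$ and the opponent then removes $p<G_\ell$ beans, the strategist may \emph{legally} remove the least $G$-summand of the resulting pile (which coincides with the least summand of $G_\ell-p$, since removing $p<G_\ell$ leaves the larger summands untouched). So the one thing Lemma 2 does not supply is the guarantee that the opponent is \emph{always} forced into the regime $p<G_\ell$; securing that, together with termination, is the heart of the argument.

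First I would fix the invariant the strategist maintains: after each of his moves the pile is in its $q$-representation (Theorem~\ref{thm:Zec+}), and the move he just made was its least summand $G_{m''}$, so the new least summand $G_{m'}$ satisfies $m'\ge m''+q$. The crucial inequality I then must prove is that the opponent's move cap after the strategist removes $G_{m''}$ is strictly less than $G_{m''+q}$, hence less than $G_{m'}$. By the rules (\ref{eq:Nim}) that cap is $qG_{m''}$, or $qG_{m''}-1$ when $q=3$ and $G_{m''}\ge2$; writing $G_{m+q}=G_{m+q-1}+G_m$, the desired bound reduces for $q\le2$ to $(q-1)G_m<G_{m+q-1}$ and for $q=3$ to $2G_m\le G_{m+2}$ (with $3<G_{m+3}$ in the base case $G_m=1$). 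These I would verify for $m\ge2q-2$, the indices at which actual $A$-summands occur, using that $G$ is strictly increasing from index $2q-2$ on. Granting this, the opponent's allowed removal satisfies $p\le qG_{m''}<G_{m''+q}\le G_{m'}$; and since the pile is at least its own least summand $G_{m'}$, the opponent can neither empty the pile nor remove as many as $G_{m'}$ beans. Thus $p<G_{m'}=G_\ell$, Lemma 2 applies to the next response, and the invariant regenerates.

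With the invariant in hand the win follows by termination. Every move strictly decreases the pile, so the game ends; but after any nonterminal strategist move the opponent cannot empty the pile, so the opponent never takes the last bean, whence the strategist does and wins. This settles the forward direction: if $n$ is not a $G$ number it has at least two summands, so the first player's opening removal of the least summand is legal (it is not the whole pile) and installs the invariant, and he wins. For the converse, suppose $n=G_m$. Whatever opening the first player plays removes some $p$ with $1\le p\le n-1<G_m$; viewing the second player as a strategist who ``left'' the single-summand pile $G_m$, Lemma 2 lets him legally remove the least summand of $G_m-p$, installing the invariant for himself, and by the same termination argument the second player wins. Hence the first player cannot win when $n$ is a $G$ number, completing the equivalence (the degenerate case $n=G_m=1$, where the first player has no legal opening and loses outright, fits the pattern).

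The step I expect to be the main obstacle is the cap inequality of the second paragraph: it is exactly what confines the theorem to $1\le q\le3$ and what forces the opponent into Lemma 2's hypothesis, and it fails once $q$ grows. The only genuinely delicate point within it is the borderline $q=3$ case, where the plain bound $qG_m<G_{m+q}$ already degenerates to equality at $G_m=2$, so the sharper cap $qG_m-1$ is essential; the matching legality subtlety on the response side, occurring when $G_{b_s}=N_6=3$, is the same borderline and is already absorbed by Lemma 2's remark that removing three beans is always legal when $q=3$, so no separate treatment is needed.
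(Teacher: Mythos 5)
Your proposal is correct and follows essentially the same route as the paper: Lemma 2 supplies the strategist's legal response, the key step is the same cap inequality showing the opponent's allowed removal $qp$ (or $qp-1$ when $q=3$) stays strictly below the next least $G$-summand, which sits at least $q$ indices higher, and the converse for $n=G_m$ is handled identically by handing the strategy to the second player. Your write-up is somewhat more explicit than the paper's about the regeneration of the invariant after every round, the termination/last-bean logic, and the degenerate case $n=1$, but these are elaborations of the same argument rather than a different one.
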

\begin{proof} Let $n=G_{a_1}+\cdots+G_{a_r}$, $a_1>\cdots>a_r\ge2q-2$, be the $q$-representation of   
the number of beans, $n$, in the heap. Assume first $r\ge2$. By hypothesis, player A removes 
$G_{a_r}$ beans. Then player B cannot remove $G_{a_{r-1}}$, the next least $G$-summand. 
Indeed, using (3) of Theorem \ref{thm:Id+}, we see that 
$G_{a_{r-1}}\ge G_{a_r+q}=1+\sum_{k=q-1}^{a_r}G_k=q+1+\sum_{k=2q-1}^{a_r}G_k$, 
which is $q+1$ in case $a_r=2q-2$ and $G_{a_r}=1$. If $G_{a_r}>1$, then 
$$
G_{a_r+q}=G_{a_r+q-1}+G_{a_r}=\begin{cases}2G_{a_r},&\text{ if }q=1;\\
2G_{a_r}+G_{a_r-1},&\text{ if }q=2;\\
2G_{a_r}+G_{a_r-1}+G_{a_r-2},&\text{ if }q=3,
\end{cases}
$$ where, for $q=3$, we used $G_{a_r+2}=G_{a_r+1}+G_{a_r-1}=G_{a_r}+G_{a_r-1}+G_{a_r-2}$. 
Hence, 
$$
G_{a_{r-1}}\ge\begin{cases}qG_{a_r}+1,&\text{ if }q=1\text{ or }2;\\
3G_{a_r},&\text{ if }q=3,
\end{cases}
$$ where we used $G_{a_r-1}+G_{a_r-2}\ge G_{a_r-1}+G_{a_r-3}=G_{a_r}$ in the case $q=3$.  

 Therefore B must remove $p$ beans with $p<G_{a_{r-1}}$. But, by Lemma 2, A can now remove the least 
$G$-summand in $G_{a_{r-1}}-p$.  

 Hence, with this strategy A is constantly in a position to play. As beans 
are diminishing at every turn, B must eventually loose. 

 Suppose now $r=1$, i.e., $n=G_{\ell}$ with $\ell\ge 2q-1$ 
so that $n>1$. By the rules of the game, A cannot remove all $n$ beans. 
Moreover, by Lemma 2, B will be in a position to play the winning strategy, i.e., 
to remove the least $G$-summand left when his turn to play comes.   
\end{proof} 

\noindent{\bf Remark.} The proof of Theorem \ref{thm:Nim} shows the stability of the strategy 
under a small variation of the rules in (\ref{eq:Nim}), namely  
\begin{equation}\label{eq:mNim} 
p'\le\begin{cases}qp-(q-1),&\text{ if }p\ge2;\\
qp, &\text{ if }p=1.
\end{cases}
\end{equation} The modified rule (\ref{eq:mNim}) has the formal advantage of being the 
same for the three values of $q\in\{1,2,3\}$. 
However, as a game, the rules (\ref{eq:Nim}) are simpler. 

\medskip

 The Wythoff game is a two-pile-of-tokens variant of the game of Nim, where a player either 
removes as many tokens from one of the two heaps, or removes the same number from both heaps. 
You win if you can leave your opponent with a `position' $\big(a(n),b(n)\big)$, where $a(n)=\lf n\a\rf$, 
and $b(n)=\lf n\a^2\rf$ with $\a$ the Golden ratio, i.e., the dominant zero of $x^2-x-1$. 
The two sequences $a(n)$ and $b(n)$ form a complementary pair of Beatty sequences: their ranges 
partition the set of natural numbers. This property carries over to all pairs $\big(a(n)=\lf n\a\rf,
b(n)=\lf n\a^q\rf\big)$, if $\a$ is the dominant zero of $x^q-x-1$, $(q\ge2)$. 

 If $f$ is a function composed of $x$ $a$-functions and $y$ $b$-functions, then, when $q=2$, 
Kimberling \cite{Ki} had proved that for all $n\ge1$
$$
f(n)=F_{x+2y-2}\,a(n)+F_{x+2y-1}\,b(n)-e_f,
$$ where $e_f$ is a nonnegative constant that depends on $f$. 

 This result has received a generalization to all $q\ge2$ \cite[Theorem 27]{Ba1}, where 
coefficients of $a(n)$ and $b(n)$ are instead $G$ numbers. Here, we only state the theorem 
for the case $q=3$, where Narayana numbers come up \cite[Theorem 12]{Ba1}.    

\begin{theorem}\label{thm:BeattyN} Let $f=\ell_1\circ\ell_2\circ\dots\circ\ell_s$, $(s\ge1)$, 
be a composite function of $x$ $a$'s and $y$ $b$'s, $x+y=s$. Then,  
$$
f(n)=N_{x+3y-2}\,a(n)+N_{x+3y}\,b(n)-N_{x+3y-3}\,n-e_f(n),
$$ where $e_f$ is a nonnegative bounded integral function of $n$ that depends on $f$.  
\end{theorem}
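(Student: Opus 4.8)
The plan is to fix $q=3$, let $\alpha>1$ be the dominant zero of $g_3(x)=x^3-x^2-1$ (so that $\alpha^3=\alpha^2+1$, in accordance with the Narayana recurrence), and write $\theta_1=\{n\alpha\}$, $\theta_3=\{n\alpha^3\}$, so that $a(n)=n\alpha-\theta_1$ and $b(n)=n\alpha^3-\theta_3$. First I would isolate the purely algebraic fact governing the coefficients: the identity
$$
\alpha^{m}=N_{m}\,\alpha^{3}+N_{m-2}\,\alpha-N_{m-3}
$$
holds for every integer $m$, by induction from the Narayana recurrence $N_m=N_{m-1}+N_{m-3}$ together with $\alpha^m=\alpha^{m-1}+\alpha^{m-3}$, verifying a few base cases and running the recurrence in both directions. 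Setting $m=x+3y$ and substituting the expressions for $a(n),b(n)$ yields the crucial reformulation
$$
N_{m-2}\,a(n)+N_{m}\,b(n)-N_{m-3}\,n=\alpha^{m}n-\bigl(N_{m-2}\theta_1+N_{m}\theta_3\bigr),
$$
so the right-hand side of the theorem is $\alpha^{m}n$ corrected by a manifestly bounded nonnegative real. Thus $e_f(n)$, defined as the (automatically integral) difference between this combination and $f(n)$, is controlled once $f(n)$ is compared with $\alpha^{m}n$.

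Boundedness then follows by an easy induction on the length $s=x+y$: since $a(k)=\alpha k+O(1)$ and $b(k)=\alpha^{3}k+O(1)$ and both maps are increasing, composing $s$ of them gives $f(n)=\alpha^{m}n+O(1)$, where $m=x+3y$ is the total shift accumulated ($a$ contributing $1$, $b$ contributing $3$); the error at each step is multiplied only by a fixed power of $\alpha$, so it stays bounded in $n$ for fixed $s$. Combined with the reformulation this shows $e_f$ is bounded, and the base cases $f=a$, $f=b$ give $e_f\equiv0$ from the coefficient identity at $m=1,3$.

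The substantive part is nonnegativity, which I would prove by induction on $s$, peeling off the outermost letter $\ell_1$. Writing $g=\ell_2\circ\cdots\circ\ell_s$ with total shift $m'\ge1$, the reformulation and the inductive hypothesis give $g(n)=\alpha^{m'}n-\rho(n)$ with $\rho(n)=N_{m'-2}\theta_1+N_{m'}\theta_3+e_g(n)\ge0$ (the hypothesis $e_g\ge0$ is exactly what is being propagated). If $\ell_1=a$, then pulling the integer combination out of the floor gives, with $m=m'+1$, the clean identity
$$
f(n)-\bigl(N_{m-2}a(n)+N_{m}b(n)-N_{m-3}n\bigr)=\bigl\lfloor N_{m-2}\theta_1+N_{m}\theta_3-\alpha\rho(n)\bigr\rfloor,
$$
so $e_f(n)\ge0$ is equivalent to $N_{m-2}\theta_1+N_{m}\theta_3-\alpha\rho(n)<1$. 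Discarding the term $-\alpha e_g(n)\le0$ (legitimate precisely by the inductive hypothesis $e_g\ge0$) reduces this to the elementary inequality
$$
\beta_{m'-1}\theta_1+\beta_{m'+1}\theta_3<1,\qquad \beta_k:=N_k-\alpha N_{k-1},
$$
for all $\theta_1,\theta_3\in[0,1)$; the case $\ell_1=b$ reduces identically to $\gamma_{m'-2}\theta_1+\gamma_{m'}\theta_3<1$ with $\gamma_k:=N_{k+3}-\alpha^{3}N_k$.

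The main obstacle is verifying these two inequalities uniformly in the index, and this is where the subdominant roots of $g_3$ enter. Because the two non-dominant zeros of $g_3$ have modulus below $1$, the Binet-type expansion of $N_k$ over the three roots shows $\beta_k$ and $\gamma_k$ tend to $0$ geometrically; hence (writing $t^{+}=\max(t,0)$) the sums $\beta_{m'-1}^{+}+\beta_{m'+1}^{+}$ and $\gamma_{m'-2}^{+}+\gamma_{m'}^{+}$ drop below $1$ for all large $m'$, leaving only finitely many small indices to check by hand. The extremal small cases are $\beta_2=1$ and $\gamma_0=\gamma_1=1$, where the partnering coefficient is non-positive, so strictness is saved by $\theta_i<1$. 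I expect this uniform bookkeeping of $|\beta_k|,|\gamma_k|$—rather than any conceptual difficulty—to be the delicate step, and I would streamline it precisely through the explicit three-root expansion of $N_k$.
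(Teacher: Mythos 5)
The paper itself does not prove Theorem \ref{thm:BeattyN}: it is one of the two results for which only a reference is given (it is quoted from \cite{Ba1}), so there is no internal proof to measure yours against, and your argument stands as an independent, self-contained route. Judged on its own it is structurally correct. The pivotal identity $\alpha^{m}=N_{m}\alpha^{3}+N_{m-2}\alpha-N_{m-3}$ checks out for all integers $m$ once $N$ is extended backwards by $N_{k}=N_{k+3}-N_{k+2}$ (so $N_{-1}=1$, $N_{-2}=0$, which the base cases $f=a$, $f=b$ at $m=1,3$ require); the reformulation of the right-hand side as $\alpha^{m}n-(N_{m-2}\theta_1+N_{m}\theta_3)$ follows, and the floor manipulation in the inductive step is valid: pulling the integer $N_{m-2}a(n)+N_{m}b(n)-N_{m-3}n$ out of $\lfloor \alpha^{j}g(n)\rfloor$ ($j=1$ or $3$) gives exactly $e_f(n)=-\bigl\lfloor N_{m-2}\theta_1+N_{m}\theta_3-\alpha^{j}\rho(n)\bigr\rfloor$, and discarding $-\alpha^{j}e_g(n)\le0$ is precisely where the inductive hypothesis enters. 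I also confirmed the reduction to $\beta_{m'-1}^{+}+\beta_{m'+1}^{+}\le1$ and $\gamma_{m'-2}^{+}+\gamma_{m'}^{+}\le1$, with strictness rescued by $\theta_i<1$ in the boundary cases $\beta_2=1$ and $\gamma_0=\gamma_1=1$ where the partner coefficient is indeed negative. The only point at which your write-up is not yet a proof is the phrase ``drop below $1$ for all large $m'$'': this must be made effective, e.g.\ by bounding $|\beta_k|$ and $|\gamma_k|$ by an explicit constant times $\alpha^{-k/2}$ (the two subdominant zeros of $g_3$ have modulus $\alpha^{-1/2}$ because the product of the three roots is $1$) and then checking the finitely many remaining indices; numerically $|\beta_k|\le 0.54$ and $|\gamma_k|\le 0.86$ for $k\ge3$, and in every relevant pair at most one entry is positive and near these bounds, so the verification does go through. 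You flagged this as the delicate step yourself; once that explicit tail estimate is written down, the proof is complete.
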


\section{Epilogue: Questions, comments and a historic note}

\noindent{\bf Questions.}  We wrote on the spur a few questions of non-appraised difficulty  
as food for further thought.  

\medskip

{\bf 1.} Is there a simple explicit map, or combinatorial argument, that proves the 
identity $c_n(1 \text{ or }q)=c_n\big(1\pmod q\big)+c_n(\ge q)$? 
(Note that a composition of $n$ may be made of parts that are all both $\ge q$ and $1\pmod q$).

\medskip

{\bf 2.} Is there a simple Nim game whose winning strategy corresponds to removing the least 
$G$-summand when the initial number of beans is not a $G$ number for all $q\ge1$? 

\medskip

{\bf 3.} Note that $13=F_7=N_{10}$ is both a Fibonacci and a Narayana number. 
Is it the largest instance? Since $8$ and $144$ are the only nontrivial 
Fibonacci powers \cite{Bug},  $8$ is the largest instance of a power of two and a Fibonacci. 
Let $G'$ represent the $G$-sequence 
corresponding to $q'=q+1$. Then $G_3=G'_6=8$, if $q=1$. If $q=2$, then $G_7=G'_{10}=13$. 
If $q=3$, then $G_{11}=G'_{14}=19$. Is it generally true that $G_{4q-1}=G'_{4q+2}$\footnote{This part is 
true as $G_{3q-3+i}=q+i(i+1)/2$ for all $i$, $1\le i\le q+1$. So $G_{4q-1}=G_{4q-2}+G_{3q-1}=
(q^2+7q+8)/2$, which is $G'_{4q+2}=G'_{4q'-2}=q'+(q'+1)(q'+2)/2$.}? If so, is 
it the largest instance of an integer both a $G$ and a $G'$ number? 

\medskip 

{\bf 4.} Is it possible to produce analogs of our theorems and of their proofs for other families of 
recurrences, and, in particular, for the tribonacci and the Padovan families?

\medskip 

 For instance, let $(p_n)_{n\ge0}$ be the generalized Padovan sequence, i.e., the fundamental 
recurrence associated with $x^q-x-1$, ($q\ge2$). There is a generalization of 
(\ref{eq:CompF}) and an analog of Theorem \ref{thm:Comp+}, namely 

\begin{theorem}\label{thm:CompP} We have\footnote{At least the last two statements of Theorem \ref{thm:CompP} 
have been observed when $q=3$ \cite[A000931]{Slo}}
$$
p_n=\begin{cases} c_{n-q+1}(q-1\text{ or }q),&(n\ge q),\\
c_n\big(q-1\pmod q\big),&\\
c_{n+1}\big(1\pmod {q-1}\text{ and }\not=1).
\end{cases}$$
\end{theorem}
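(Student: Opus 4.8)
The plan is to treat the three identities exactly as the three identities of Theorem \ref{thm:Comp+}: for each composition count I will produce, by an elementary first-part splitting, the generalized Padovan recurrence $p_{n+q}=p_{n+1}+p_n$ (after the appropriate index shift) and then match finitely many initial values. Throughout I count the empty composition of $0$, so that $c_0(\cdot)=1$ and $c_m(\cdot)=0$ for $m<0$, and I use the equivalent form $p_n=p_{n-q+1}+p_{n-q}$ of the defining recurrence.

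For the first identity, put $d_m:=c_m(q-1\text{ or }q)$. A composition counted by $d_m$ begins with a part equal to $q-1$ or to $q$, so $d_m=d_{m-(q-1)}+d_{m-q}$ for every $m\ge1$. Setting $P_n:=d_{n-q+1}$ turns this into $P_n=P_{n-q+1}+P_{n-q}$, the very recurrence satisfied by $(p_n)$. Since $P_n=d_{n-q+1}$ and $p_n$ agree on $q$ consecutive indices (both are a block of zeros followed by the first attainable value), I conclude $p_n=c_{n-q+1}(q-1\text{ or }q)$ for $n\ge q$.

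For the second identity, put $e_n:=c_n(q-1\pmod q)$, whose parts are $q-1,2q-1,3q-1,\dots$. Splitting on the first part gives $e_N=\sum_{k\ge1}e_{N-(kq-1)}$ for $N\ge1$. Writing this for $N=n+q$ and re-indexing by $j=k-1$ peels off the single term $e_{n+1}$ and leaves precisely the sum that represents $e_n$, so the series telescopes to $e_{n+q}=e_{n+1}+e_n$, the Padovan recurrence itself. Matching the initial values $e_1,\dots,e_q$ against $p_1,\dots,p_q$ then gives $p_n=c_n(q-1\pmod q)$ for $n\ge1$. The third identity is identical in spirit: with $g_m:=c_m(1\pmod{q-1}\text{ and }\neq1)$, whose parts form the set $\{1+k(q-1):k\ge1\}=\{q,2q-1,3q-2,\dots\}$, the first-part split $g_m=\sum_{k\ge1}g_{m-(1+k(q-1))}$ telescopes in the same way to $g_{m+q-1}=g_m+g_{m-1}$; putting $P_n:=g_{n+1}$ recovers $P_n=P_{n-q+1}+P_{n-q}$, and comparing $g_1,\dots,g_q=0,\dots,0,1$ with $p_0,\dots,p_{q-1}$ finishes it.

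The routine but error-prone parts are the bookkeeping in the two telescoping computations — one must check that each recurrence first becomes valid exactly at the index where all subtracted terms are defined — together with the alignment of the two index shifts ($n\mapsto n-q+1$ and $n\mapsto n+1$), including the exclusion of $n=0$ in the second identity, where $e_0=1\neq0=p_0$. The step I expect to be genuinely hard is to replace the recurrence proof of the third identity by a bijective one mirroring the MacMahon-conjugate argument used for $c_n(\ge q)$ in Theorem \ref{thm:Comp+}: one would like to conjugate compositions of $n$ into parts $\equiv q-1\pmod q$ directly into compositions of $n+1$ into parts $\equiv1\pmod{q-1}$, $\neq1$, but here the shift is only $+1$ (against the $+(q-1)$ shift of the Narayana case) and the two moduli differ, so finding the exact conjugation-and-reblocking rule is the point that will require the most care.
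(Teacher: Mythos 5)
Your proposal is correct and follows essentially the same route as the paper: split each composition count on a part (the paper uses the last part, you the first), telescope the resulting sums to obtain the recurrence $c_{n+q}=c_{n+1}+c_n$, and match $q$ consecutive initial values against $(p_n)$. The bijective version of the third identity that you flag as the hard part is not needed here — it is precisely the paper's open Question 5 in the epilogue.
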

\begin{proof} There are $c_{n-q+1}$ and $c_{n-q}$ compositions of $n\ge q$ into parts $q-1$ or $q$ 
that end, respectively, with a part $q-1$ and with a part $q$. Hence, $c_n=c_{n-q+1}+c_{n-q}$. 
Since $c_i=0=p_{q-1+i}$ for $i=1,\dots,q-2$ 
and $c_{q-1}=p_{2q-2}=1$ and $c_q=p_{2q-1}=1$, the first statement of the theorem holds. By similar 
reasoning, if $c_n$ is the number of compositions of $n$ into parts $q-1$ mod $q$, we find that 
$c_{n+q}-c_n=(c_{n+1}+c_{n-q+1}+c_{n-2q+1}+\dots)-(c_{n-q+1}+c_{n-2q+1}+\dots)$. Thus, $c_{n+q}=c_{n+1}+c_n$. 
If the $c_n$'s are compositions of $n$ into parts $q$, $2q-1$, $3q-2,\dots$, then $c_{n+q}-c_{n+1}=(c_n+c_{n-q+1}+c_{n-2q+2}+\dots)-
(c_{n-q+1}+c_{n-2q+2}+\dots)$, implying again that $c_{n+q}=c_{n+1}+c_n$. Checking that initial conditions match completes the 
proof. 
\end{proof}
{\bf 5.} But can one find an explicit bijection between the set of  
compositions of $n$ into parts $q-1\pmod q$ and 
compositions of $n+1$ into parts at least $q$ and $1$ (mod $q-1$), as Theorem \ref{thm:Comp+} did  
between compositions of $n$ into parts $1$ mod $q$ and compositions of $n+q-1$ into parts $\ge q$? 

\medskip

{\bf 6.} As $p_{n+q-1}=p_n+p_{n-1}$, we see from Theorem \ref{thm:CompP} that $c_n(q-1\text{ or }q)=c_n(q-1\pmod q)
+c_n(1\pmod{q-1}\text{ and }\not=1)$ and raise, as in question {\bf 1.}, the question of the existence of a combinatorial 
explanation of this identity.

\medskip

\noindent{\bf Comments and further results.}  We make some additional observations and comments with 
regard to the fourth question aforementioned.

 There is a theorem corresponding to Theorem \ref{thm:Zec+} for the tribonacci family \cite[p. 112]{Fr}, \cite{Le}. 
The proof in \cite{Le} was also a counting proof, but it can be replaced by a proof entirely similar 
to the one we gave for Theorem \ref{thm:Zec+}: Say $q\ge2$ and $(T_n)$ is the fundamental recurrence 
associated with $x^q-x^{q-1}-\dots-x-1$, i.e., $(T_n)$ is the generalized tribonacci sequence. 
Define $a_k=T_{k+q}$ 
for all integers $k$. Thus, $a_0=1$, $a_1=2$ and $(a_k)$ is increasing for nonnegative $k$. A lemma analogous  
to Lemma 1 says that the number, $b_k$, of binary strings of length $k$, with no $q$ consecutive $1$'s, 
is $a_k$. 
Such strings with $k\ge q$ have to end by exactly one of $0$, $01$, $011$, $\dots$, 
$01\dots1$ ($q-1$ $1$'s) and thus 
one sees that $(b_k)$ satisfies the same recursion as $(a_k)$. These strings are in one-to-one correspondence 
with (tribonacci) $q$-representations $\sum_{i=0}^{k-1}\e_ia_i$ on $a_0,\dots,a_{k-1}$, where the $\e_i$'s are 
again $0$ or $1$. The greedy algorithm applied to an integer $n\in[0,a_k)$ produces a tribonacci 
$q$-representation on $a_0,\dots,a_{k-1}$. Indeed, if $a_i<n<a_{i+1}$, then the largest summand 
$a_j$ in $n-a_i$ satisfies $j<i$,  
or else $n\ge2a_i=a_i+(a_{i-1}+\dots+a_{i-q})=a_{i+1}+a_{i-q}\ge a_{i+1}$, as $a_{i-q}\ge0$, which 
would contradict $n<a_{i+1}$. If the greedy algorithm produces successively $a_i,a_{i-1},\dots,a_{i-(q-2)}$, 
then the next summand cannot be $a_{i-q+1}$, because it would again mean that 
$n\ge a_i+a_{i-1}+\dots+a_{i-q+1}=a_{i+1}$. Therefore, we can deduce the uniqueness of the tribonacci 
$q$-representation for all positive integers using the same reasoning as in Theorem \ref{thm:Zec+}. 
This can be pursued into at least the first part of Theorem \ref{thm:Comp+} and its proof: We had proved using a bijective 
argument that $c_n(1\text{ or }q)=G_{n+q-1}$. Here we find that $c_n(\le q)=T_{n+q-1}$ by adding a $0$ 
upfront each of the $a_n$ binary strings $\e_0,\dots,\e_{n-1}$ with no $q$ consecutive $1$'s. 
Then we bijectively 
map those augmented strings into the set of compositions of $n+1$ reading a string left-to-right and 
mapping $0$ to a part $1$, $01$ to a part $2$ and so on up to strings $01\dots1$ ($q-1$ ones) which 
are mapped to a part $q$. We deduce that $c_{n+1}(\le q)=a_n=T_{n+q}$ yielding the result. 

\smallskip

 In fact, there is, for the tribonacci family, a partial analog of Theorems \ref{thm:Comp+} and \ref{thm:CompP}, i.e.,  

\begin{theorem}\label{thm:CompT} We have
\begin{eqnarray*}
T_n&=&c_{n-q+1}(1,2,\dots,q),\;(n\ge q),\\
U_n&=&c_n\big(1,2,\dots,q-1\pmod q\big)\;(n\ge1),
\end{eqnarray*} where $(U_n)$ satisfies the generalized tribonacci recursion, i.e., $U_{n+q}=U_{n+q-1}+U_{n+q-2}+
\dots+U_n$, $U_0=0$ and $U_i=2^{i-1}$, $i=1,2,\dots,q-1$.
\end{theorem}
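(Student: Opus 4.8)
The plan is to treat the two identities separately, since the first reduces to material already developed for the tribonacci family, while the second carries a genuine subtlety at a single boundary index.

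For the first identity I would argue exactly as in the tribonacci discussion preceding the theorem. Writing $c_m:=c_m(1,2,\dots,q)$ and splitting a composition of $m$ according to its first part $j\in\{1,\dots,q\}$ gives $c_m=\sum_{j=1}^{q}c_{m-j}$ for every $m\ge q$, with the convention $c_0=1$; this is precisely the generalized tribonacci recurrence attached to $x^q-x^{q-1}-\dots-1$. It then suffices to match $q$ consecutive initial values: for $1\le m\le q$ one has $c_m=c_0+\dots+c_{m-1}=2^{m-1}$, which coincides with $T_{m+q-1}$ because $a_k=T_{k+q}=2^k$ for $0\le k\le q-1$ (a binary string of length $k<q$ can never contain $q$ consecutive ones). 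Hence $c_m=T_{m+q-1}$ for all $m\ge0$, i.e. $T_n=c_{n-q+1}(1,\dots,q)$ for $n\ge q$.

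For the second identity the allowed parts are the positive integers not divisible by $q$. I would put $d_n:=c_n\big(1,\dots,q-1\pmod q\big)$ for $n\ge1$ and $d_0:=1$. Splitting by the first part and separating out the forbidden multiples of $q$ gives
\[
d_n=\sum_{i=0}^{n-1}d_i-\sum_{k\ge1,\;kq\le n}d_{n-kq}.
\]
Setting $A_n=\sum_{i=0}^{n-1}d_i$ and $B_n=\sum_{k\ge1,\,kq\le n}d_{n-kq}$, the identities $A_n-A_{n-q}=d_{n-1}+\dots+d_{n-q}$ and $B_n-B_{n-q}=d_{n-q}$ allow me to subtract the $(n-q)$-instance of $d_n=A_n-B_n$ from the $n$-instance and obtain, after cancellation, the full tribonacci recurrence $d_n=d_{n-1}+d_{n-2}+\dots+d_{n-q}$.

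The main obstacle is that this subtraction is legitimate only when $d_{n-q}=A_{n-q}-B_{n-q}$, which fails at $n=q$, since $d_0=1$ while $A_0-B_0=0$ (the empty composition is not produced by the splitting). Thus the recurrence for the combinatorial sequence holds only for $n\ge q+1$, with a defect $-1$ at $n=q$; equivalently, the generating function equals $(1-x^q)\big/(1-x-\dots-x^q)$, whose numerator supplies exactly this $-x^q$ correction. The clean fix is to declare $U_0:=0$ rather than $1$: the relation $d_q=(d_{q-1}+\dots+d_0)-1=d_{q-1}+\dots+d_1$ (using $d_0=1$) then shows that $U_n=U_{n-1}+\dots+U_{n-q}$ holds for all $n\ge q$. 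Finally, for $1\le n\le q-1$ no multiple of $q$ intervenes, so $B_n=0$ and $d_n=A_n=d_0+\dots+d_{n-1}=2^{n-1}$, matching the prescribed $U_i=2^{i-1}$. Hence $U_n=d_n=c_n\big(1,\dots,q-1\pmod q\big)$ for all $n\ge1$, which completes the proof.
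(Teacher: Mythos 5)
Your proof is correct, and for the second identity it is essentially the paper's argument: both of you split a composition by its first part and telescope the resulting full-history sum at lag $q$ to extract the generalized tribonacci recurrence $d_n=d_{n-1}+\cdots+d_{n-q}$. The paper does this by expanding $c_{n+q}$ and $c_n$ and cancelling the common tail, then simply says ``checking that initial conditions match completes the proof''; you instead subtract the $(n-q)$-instance from the $n$-instance and, valuably, isolate exactly where that step breaks down, namely at $n=q$, where the empty composition forces $d_0=1$ while the splitting identity would demand $0$. Your observation that this single defect is precisely what makes $U_0=0$ (rather than $1$) the correct seed is the cleanest justification I have seen for the initial conditions in the statement, and it is a point the paper glosses over. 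For the first identity you diverge from the paper: the paper obtains $T_n=c_{n-q+1}(1,\dots,q)$ bijectively in the discussion preceding the theorem, by matching compositions into parts $\le q$ with binary strings avoiding $q$ consecutive ones (via the blocks $0,01,\dots,01\cdots1$), whereas you run the same split-by-first-part recurrence and compare the $q$ initial values $c_m=2^{m-1}=T_{m+q-1}$ for $1\le m\le q$. Your route is more routine but entirely adequate; the paper's bijection buys the extra structural information (a correspondence with $q$-representations) that it exploits elsewhere, while your version keeps the whole theorem self-contained under one uniform method.
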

\begin{proof} To establish the second identity, one may proceed as in Theorem \ref{thm:CompP}. 
Let $c_n$ stands for $c_n\big(1,2,\dots,q-1\pmod q\big)$. Consider $c_{n+q}-c_n=(c_{n+q-1}+c_{n+q-2}+
\dots+c_{n+1}+c_{n-1}+\dots+c_{n-q+1}+c_{n-q-1}+\dots)-
(c_{n-1}+\dots+c_{n-q+1}+c_{n-q-1}+\dots)$. Hence, $c_{n+q}=c_{n+q-1}+c_{n+q-2}+\dots+c_{n+1}+c_n$. 
\end{proof}

 The second identity of Theorem \ref{thm:CompT} was proved for $q=3$ and conjectured 
to hold for all $q\ge2$ in \cite{Ro}. Note that for $q=2$, $U_n=F_n$. 

\smallskip

Also, Duch\^ene and Rigo \cite{DuRi} proposed a three-pile-of-tokens 
variant of Wythoff's game, where the tribonacci rather than the 
Fibonacci word arises in the winning strategy. 

\medskip

\noindent{\bf Historic note.}  Beginning in  1844 the $G$-sequence for $q=2$ was referred to as the 
{\it Lam\'e series}, because Lam\'e had used these numbers to give an upper bound on the number of steps the 
euclidean algorithm took to find the greatest common divisor of two integers, until 
the French mathematician, \'Edouard Lucas, on a voyage to Italy, found them in a copy 
of the Liber Abbaci of Leonardo de Pisa, alias Fibonacci, and referred to them, starting in 1876 and 
henceforth, as the Fibonacci series. 
In the Liber Abbaci they modelled the growth rate of a population of rabbits. Amusingly, it 
was recently discovered \cite{AJ} that the $G$-sequence for $q=3$ modelled the 
growth of a population of cows in some writings of Narayana, a 14th-century prominent 
Indian mathematician, 
who wrote in Sanskrit. In Narayana's model, it took two years, i.e., two generations, 
for a newborn cow, say 
$c_0$, to become a mature cow, $C$, which would then live and reproduce one veal every year 
forever onwards. Starting with a newborn $c_0$ we represent below and symbolically the 
herd's population for the first few years 
$$
{\rm c_0,\; c_1,\; C,\; Cc_0,\; Cc_0c_1,\; Cc_0c_1C,\; Cc_0c_1CCc_0,\; 
Cc_0c_1CCc_0Cc_0c_1,\cdots}
$$  where $c_0$, $c_1$ and $C$ from one year to the next become respectively $c_1$, $C$ 
and $Cc_0$. We see that the number of cows at one generation is equal to that of the previous 
generation plus the number of newborns. But the number of newborns is precisely the number of 
cows born at least three generations before. Therefore, $N_{n+3}=N_{n+2}+N_n$.

\end{document}